\RequirePackage{amsmath}
\documentclass{article}
\usepackage[latin1]{inputenc}
\usepackage{amsfonts}
\usepackage{amssymb}
\usepackage{enumitem}
\usepackage{geometry}
\usepackage{hyperref}
\hypersetup{colorlinks=true,urlcolor=blue,citecolor=blue}

\usepackage{amsthm}
\usepackage{thmtools}
\usepackage[capitalise]{cleveref}
\usepackage{xr}
\newtheorem{theorem}{Theorem}[section]
\newtheorem{proposition}[theorem]{Proposition}
\newtheorem{corollary}[theorem]{Corollary}
\newtheorem{definition}[theorem]{Definition}

\declaretheorem[style=remark,qed=$\Diamond$,Refname={Remark,Remarks},sibling=theorem]{remark}

\Crefname{assumption}{Assumption}{Assumptions}

\newcommand{\setto}{\rightrightarrows} 
\newcommand{\Fix}{\operatorname{Fix}}
\DeclareMathOperator{\StrFix}{\mathbf{Fix}}

\newcommand{\range}{\operatorname{range}}

\DeclareMathOperator*{\argmin}{arg\,min}
\DeclareMathOperator*{\Limsup}{Lim\,sup}


\author{Matthew K. Tam\thanks{Institut f\"ur Numerische und Angewandte Mathematik,
		Universit\"at G\"ottingen, 37083 G\"ottingen, Germany.
		Email:~\href{mailto:m.tam@math.uni-goettingen.de}{m.tam@math.uni-goettingen.de}} }
\title{Algorithms Based on Unions of Nonexpansive Maps}

\begin{document}
	
\maketitle

\begin{abstract}
	In this work, we consider a framework for the analysis of iterative algorithms which can be described in terms of a structured set-valued operator. More precisely, at each point in the ambient space, we assume that the value of the operator can be expressed as a finite union of values of single-valued paracontracting operators. Our main result, which shows that the associated fixed point iteration is locally convergent around strong fixed points, generalises a theorem due to Bauschke~\&~Noll~(2014).
\end{abstract}

\paragraph{Keywords.} Set-valued map $\cdot$ fixed point iteration $\cdot$ paracontracting operator $\cdot$ sparsity

\paragraph{MSC2010.} 47H04         
$\cdot$ 47H10 
$\cdot$ 90C26 
                     
\section{Preliminaries}
Our setting throughout is the Euclidean space $X=\mathbb{R}^k$ equipped with norm $\|\cdot\|$. Following Elser \emph{et al} \cite{Elsner}, we define the class of \emph{paracontracting operators} as follows.
\begin{definition}[Paracontracting operators {\cite{Elsner}}]
 A continuous operator, $S:X\to X$, is said to be \emph{paracontracting} if 
  \begin{equation}\label{eq:qne inequality}
  \|S(x)-y\|<\|x-y\|\quad(\forall x\in X\setminus\Fix S)(\forall y\in \Fix S),
  \end{equation}
where $\Fix S=\{x\in X:S(x)=x\}$ denotes the fixed point set of the (single-valued) operator $S$.
\end{definition}
The class of paracontracting operators includes, in particular, all \emph{averaged nonexpansive} and \emph{firmly nonexpansive} operators; see \cite[\S4]{BauComb}. In the literature, a possibly discontinuous operators satisfying \eqref{eq:qne inequality} are called \emph{strictly quasinonexpansive}. Indeed, a paracontracting operator could be equally well described as a continuous, strictly quasinonexpansive operator.

In this work, we consider a structured set-valued operator, $T:X\setto X$, whose values can be expressed as the union of paracontracting operators taken from a finite collection of such operators. To be precise, let $I$ denote a finite index set. We consider operators $T:X\setto X$ of the form
\begin{equation}
  \label{eq:T first definition}
  T(x) := \left\{T_i(x):i\in\phi(x)\right\}\quad\forall x\in X,
\end{equation}  
where, for each $i\in I$, the single-valued operator $T_i:X\to X$ is paracontracting, and the \emph{active selector}, $\phi:X\setto I$, dictates which operators from the collection $\{T_i\}_{i\in I}$ are active and satisfies the following two properties.
\begin{enumerate}[label=(P\arabic*)]
	\item\label{P:nonempty} $\phi(x)$ is non-empty for every $x\in X$.
	\item\label{P:lsc} $\phi$ is \emph{outer semicontinuous (osc)} \cite[\S3.2]{DontRock}, that is, for all $x\in X$ it holds that
	   $$\phi(x)\supseteq\Limsup_{y\to x}\phi(y):=\{i\in I:\exists x_n\to x,\,\exists i_n\to i\text{ with }i_n\in\phi(x_n)\},$$
	where the limit superior is taken in the sense of \emph{Painlev\'e--Kuratowski} \cite[\S3.1]{DontRock}.
\end{enumerate}
Note that, for set-valued maps, there are two notions of fixed points which both coincide for single-valued operators; the \emph{fixed point set} denoted $\Fix T:=\{x:x\in T(x)\}$, and the \emph{strong fixed point set} denoted $\StrFix T:=\{x:T(x)=\{x\}\}$. It is clear that $\StrFix T\subseteq\Fix T$.

In what follows, we shall refer to operators of the form \eqref{eq:T first definition} which satisfy the aforementioned properties as \emph{union paracontracting}.

\begin{definition}[Union paracontracting]\label{def:union para}
  An  operator $T:X\setto X$ is said to be \emph{union paracontracting} if there exist a finite index set, $I$, a collection of single-valued paracontracting operators, $\{T_i\}_{i\in I}$, and an active selector, $\phi$, satisfying \ref{P:nonempty}--\ref{P:lsc} such that $T$ is of the form specified by \eqref{eq:T first definition}.
\end{definition}

In this work, we study convergence properties of the fixed-point iteration of union paracontracting operators. More precisely, given an initial point, $x_0\in X$, we consider a sequence $(x_n)_{n\in\mathbb{N}}$ generated by iterating the operator $T$ in the sense that
\begin{equation}
 \label{eq:T algorithm}
  x_{n+1} \in T(x_n)\quad\forall n\in\mathbb{N}.    
\end{equation} 
Equivalently, for each $n\in\mathbb{N}$, there is an index $i_n\in\phi(x_n)$ such that
\begin{equation}
 \label{eq:T algorithm realisation}
  x_{n+1}=T_{i_n}(x_n).
\end{equation}
The following result is a general convergence criterion for sequences conforming the structure of the iteration defined by \eqref{eq:T algorithm realisation}. 
\begin{theorem}[Convergence with admissible control {\cite[Theorem~1]{Elsner}}]\label{th:elsner}
   Let $\{T_j\}_{j\in J}$ be a collection of paracontracting operators on $X$ where the index set $J$ is assumed to be finite. Let $\{j_n\}_{n\in\mathbb{N}}\subseteq J$ be an \emph{admissible control} (\emph{i.e.,} each element in $J$ appears infinitely often in the sequence) and let $z_0\in X$ be given.  Then the sequence defined by
     $$ z_{n+1}=T_{j_n}(z_n)\quad\forall n\in\mathbb{N}, $$
   converges if and only if the collection of operators, $\{T_{j}\}_{j\in J}$, have a common fixed point. Moreover, in this case $\displaystyle\lim_{n\to\infty}z_n=\overline{z}\in\cap_{j\in J}\Fix T_{j}$.
\end{theorem}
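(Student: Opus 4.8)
The plan is to prove the two implications of the ``if and only if'' separately; the forward direction (a common fixed point forces convergence) carries essentially all of the difficulty, while the converse is soft. For the converse, suppose $z_n\to\overline z$, fix $j\in J$, and use admissibility to pick a subsequence $(n_k)$ with $j_{n_k}=j$; then $z_{n_k+1}=T_j(z_{n_k})$, and passing to the limit with the continuity of $T_j$ gives $\overline z=T_j(\overline z)$. Since $j$ was arbitrary, $\overline z\in\bigcap_{j\in J}\Fix T_j$; in particular a common fixed point exists, and this also yields the ``moreover'' statement.

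Now assume $y\in\bigcap_{j\in J}\Fix T_j$. Because $y\in\Fix T_{j_n}$ and $T_{j_n}$ is paracontracting, $\|z_{n+1}-y\|\le\|z_n-y\|$ for all $n$, so $\rho:=\lim_n\|z_n-y\|$ exists and the orbit stays in the compact ball $K:=\overline{B}(y,\|z_0-y\|)$; hence $(z_n)$ admits a cluster point $\overline z$, with $\|\overline z-y\|=\rho$. The whole proof now reduces to showing $\overline z\in\bigcap_{j\in J}\Fix T_j$: granting this, $\overline z$ is itself a common fixed point, so $\|z_n-\overline z\|$ is nonincreasing and, being $\to 0$ along a subsequence, forces $z_n\to\overline z$.

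It remains to prove $\overline z\in\bigcap_{j\in J}\Fix T_j$, which is the heart of the matter. Two asymptotic-regularity facts, both resting on compactness of $K$, set the stage. First, $\|z_{n+1}-z_n\|\to0$: otherwise a subsequence along which (by finiteness of $J$) the active index is constantly some $j$ would converge to a limit $u$ with $u\neq T_j(u)$ yet $\|u-y\|=\rho=\|T_j(u)-y\|$, contradicting paracontraction of $T_j$. Second, telescoping gives $\sum_n\big(\|z_n-y\|^2-\|z_{n+1}-y\|^2\big)<\infty$, and since $x\mapsto\|x-y\|^2-\|T_j(x)-y\|^2$ is continuous, nonnegative, and vanishes exactly on $\Fix T_j$, it follows that $\dist(z_n,\Fix T_{j_n})\to0$. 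Consequently the cluster set $C$ of $(z_n)$ is a continuum (compact and connected, because $X$ is finite-dimensional and $\|z_{n+1}-z_n\|\to0$) contained in $\bigcup_{j\in J}\Fix T_j$, and restricting to the times at which a prescribed $j$ is active exhibits a point of $C$ inside $\Fix T_j$ for every $j$. To drive $\overline z$ into the common intersection, I would take $z_{n_k}\to\overline z$ and, iterating the extraction of convergent shifts $z_{n_k+\ell}$ together with finiteness of $J$ and a diagonal argument, arrange that for each fixed $\ell\ge0$ the shift $z_{n_k+\ell}$ converges and the active index $j_{n_k+\ell}$ stabilizes, say to $\iota_\ell$; since every such limit lies on the sphere $\|\cdot-y\|=\rho$, paracontraction forces each shift-limit to equal $\overline z$ and gives $\overline z\in\Fix T_{\iota_\ell}$ for all $\ell$. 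The main obstacle is precisely that ``admissible control'' permits unboundedly long gaps between successive occurrences of an index, so that $\{\iota_\ell:\ell\ge0\}$ need not be all of $J$; bridging this is where the connectedness of $C$ and the regularity $\dist(z_n,\Fix T_{j_n})\to0$ have to be combined, and it is the single genuinely delicate point in the argument.
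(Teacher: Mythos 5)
Your converse direction, the Fej\'er-type monotonicity $\|z_{n+1}-y\|\le\|z_n-y\|$, the boundedness and extraction of a cluster point $\overline z$, the asymptotic regularity $\|z_{n+1}-z_n\|\to0$, and the reduction of the whole theorem to the claim that some cluster point is a common fixed point are all correct. But the write-up stops exactly at the step that carries the theorem: you never prove $\overline z\in\bigcap_{j\in J}\Fix T_j$. Your shift-and-diagonalise extraction only gives $\overline z\in\Fix T_{\iota_\ell}$ for the indices $\iota_\ell$ that happen to stabilise along the chosen shifts, and, as you note, these need not exhaust $J$ when occurrences of an index are separated by unboundedly long gaps. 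The proposed bridge via connectedness of the cluster set and $d(z_n,\Fix T_{j_n})\to0$ is a gesture rather than an argument; it is not clear those facts suffice, and they are not needed. (For the record, the paper does not reprove this result; it quotes it from Elsner--Koltracht--Neumann, so the comparison here is with that proof.)

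The missing idea is to control the gap using quasi-nonexpansiveness with respect to $\overline z$ itself, by induction on the set of indices already known to fix $\overline z$. Suppose $z_{n_k}\to\overline z$ and $\overline z\in\Fix T_j$ for every $j$ in a proper subset $J'\subsetneq J$ (your base step: pass to a subsequence with $j_{n_k}$ constant, which puts that index into $J'$). For each $k$ let $m_k\ge n_k$ be the first time an index outside $J'$ is applied; admissibility guarantees $m_k$ exists, and by pigeonhole you may assume $j_{m_k}=j'\notin J'$ is constant. During $[n_k,m_k)$ only operators from $J'$ act, and each of them fixes $\overline z$, so $\|z_{m_k}-\overline z\|\le\|z_{n_k}-\overline z\|\to0$ --- the length of the gap is irrelevant, which is precisely what defeats the obstacle you identified. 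Hence $z_{m_k}\to\overline z$ and $z_{m_k+1}=T_{j'}(z_{m_k})\to T_{j'}(\overline z)$; since both $\overline z$ and $T_{j'}(\overline z)$ lie at distance $\rho$ from the common fixed point $y$, paracontraction forces $\overline z\in\Fix T_{j'}$, enlarging $J'$. After at most $|J|$ rounds $\overline z\in\bigcap_{j\in J}\Fix T_j$, and your concluding monotonicity argument then yields $z_n\to\overline z$. With this insertion the proof closes; without it, the central implication remains unproved.
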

Given the sequence $(x_n)_{n\in\mathbb{N}}$ in \eqref{eq:T algorithm realisation} defined by applying the operators $(T_{i_n})_{n\in\mathbb{N}}$, the \emph{admissible set} for $(x_n)_{n\in\mathbb{N}}$ is the (finite) set $I^*\subseteq I$ such that $i^*\in I^*$ appears infinitely often in $(i_n)_{n\in\mathbb{N}}$. Note that $I^*$ is always nonempty by virtue of the finiteness of $I$ and, moreover, the definition of $I^*$ ensures the existence of an $n_0\in\mathbb{N}$ such that $(i_n)_{n=n_0}^\infty\subseteq I^*$ is an admissible control.

\section{The Main Result: Local convergence}
In this section, we prove an abstract result concerning local convergence of iterations based on union contracting operators. We begin with the following proposition which applies, in particular, to active selectors. In what follows, $\mathbb{B}(x^*;\delta)$ denotes the \emph{closed ball} centered at $x^*$ with radius $\delta>0$.
\begin{proposition}\label{prop:basin of attraction}
  Suppose $\varphi:X\setto M$ for a finite set $M$, and let $x^*\in X$. Then $\varphi$ is outer semicontinuous at $x^*$ if and only if there exists $\delta>0$ such that $$\varphi(x)\subseteq\varphi(x^*)\quad\forall x\in\mathbb{B}(x^*;\delta).$$
\end{proposition}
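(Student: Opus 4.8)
The plan is to reduce everything to the single observation that the finite set $M$, endowed with the discrete topology implicit in the notation $i_n\to i$, has the property that a convergent sequence in $M$ is eventually constant. Both implications then follow by elementary sequence-chasing against the Painlev\'e--Kuratowski definition of $\Limsup$.

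For the implication ``$\Leftarrow$'': assume there is a $\delta>0$ with $\varphi(x)\subseteq\varphi(x^*)$ for all $x\in\mathbb{B}(x^*;\delta)$, and let $i\in\Limsup_{y\to x^*}\varphi(y)$ be arbitrary. Choose witnessing sequences $x_n\to x^*$ and $i_n\to i$ with $i_n\in\varphi(x_n)$. Since $M$ is finite, $i_n=i$ for all $n$ large enough; and since $x_n\to x^*$, we have $x_n\in\mathbb{B}(x^*;\delta)$ for all $n$ large enough. For such $n$, $i=i_n\in\varphi(x_n)\subseteq\varphi(x^*)$, so $i\in\varphi(x^*)$. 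Hence $\Limsup_{y\to x^*}\varphi(y)\subseteq\varphi(x^*)$, which is precisely outer semicontinuity at $x^*$.

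For the implication ``$\Rightarrow$'' I argue by contraposition. Suppose no such $\delta$ exists; then for each $n\in\mathbb{N}$ there is a point $x_n\in\mathbb{B}(x^*;1/n)$ and an index $i_n\in\varphi(x_n)\setminus\varphi(x^*)$. Finiteness of $M$ lets me pass to a subsequence along which $i_n$ is constant, say equal to some $i\notin\varphi(x^*)$; along that subsequence $x_n\to x^*$, $i_n\to i$ and $i_n\in\varphi(x_n)$, so $i\in\Limsup_{y\to x^*}\varphi(y)$. Thus $\Limsup_{y\to x^*}\varphi(y)\not\subseteq\varphi(x^*)$ and $\varphi$ fails to be osc at $x^*$.

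I do not anticipate a genuine obstacle here: the content is entirely in recognising that finiteness of $M$ trivialises the topology on the index side, turning ``$i_n\to i$'' into ``$i_n=i$ eventually''. The only points requiring a line of care are the precise unwinding of the $\Limsup$ notation and the choice of the radii $1/n$ in the contrapositive argument; beyond that the proof is routine.
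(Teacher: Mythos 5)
Your proof is correct and follows essentially the same route as the paper: the ``$\Leftarrow$'' direction is identical (eventual constancy of $(i_n)$ by finiteness of $M$, then the ball inclusion), and your contrapositive argument for ``$\Rightarrow$'' is the same construction as the paper's proof by contradiction, with radii $1/n$ and a pigeonhole passage to a constant subsequence producing an element of $\Limsup_{y\to x^*}\varphi(y)$ outside $\varphi(x^*)$.
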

\begin{proof}
  ($\Longrightarrow$):~Suppose, by way of a contradiction, that $\varphi$ is osc at $x^*$ but there exists a sequence $(x_n)_{n\in\mathbb{N}}$ with $x_n\in\mathbb{B}(x^*;1/n)$ such that $\varphi(x_n)\not\subseteq\varphi(x^*)$. Then there is a sequence, $(m_n)_{n\in\mathbb{N}}\subseteq M$, such that
  \begin{equation}
    \label{eq:m_n definition}
    m_n\in\varphi(x_n)\setminus\varphi(x^*) \quad\forall n\in\mathbb{N}.
  \end{equation} 
  Since $M$ is finite, by applying the pigeonhole principle, there exists a subsequence $(m_{k_n})_{n\in\mathbb{N}}$ such that $m_{k_n}=m\in M$ for all $n\geq n_0$. Altogether, we deduce that
   \begin{equation}
    \label{eq:m inclusion}
   	 m\in\Limsup_{x\to x^*}\varphi(x)\subseteq \varphi(x^*),
   \end{equation}
  where the last inclusion follows from the definition of osc. But then \eqref{eq:m inclusion} contradicts \eqref{eq:m_n definition}, and thus completes the first half of the proof.
  
  ($\Longleftarrow$):~Consider two arbitrary sequences, $(x_n)_{n\in\mathbb{N}}$ and $(i_n)_{n\in\mathbb{N}}$, such that $x_n\to x^*$ and $i_n\to i$ with $i_n\in\varphi(x_n)$. In order to deduce that $\varphi$ is osc at $x^*$, we must show that $i\in\varphi(x^*)$. To this end, since $M$ is finite, there exists an index $n_0\in\mathbb{N}$ such that $i_n=i$ for all $n\geq n_0$. By assumption, there is a $\delta>0$ such that $\varphi(x)\subseteq\varphi(x^*)$ for all $x\in\mathbb{B}(x^*;\delta)$ and, consequently, there is an index $n_1\geq n_0$ such that $\varphi(x_n)\subseteq\varphi(x^*)$ for all $n\geq n_1$. Altogether, 
    $$i=i_n\in\varphi(x_n)\subseteq\varphi(x^*)\quad\forall n\geq n_1.$$
  Thus we have that $i\in\varphi(x^*)$ and so the proof is complete. 
\end{proof}

We are now ready to prove our main result concerning local convergence of the algorithm defined by \eqref{eq:T algorithm}. In what follows, we adopt the convention that $\sup\emptyset=-\infty$. 
\begin{theorem}[Local convergence around strong fixed points]\label{th:local convergence}
  Suppose $T:X\setto X$ is union paracontracting with $x^*\in\StrFix T$, and define
   \begin{equation}
   \label{eq:attraction radius}
     r:=\sup\left\{\delta>0:\phi(x)\subseteq\phi(x^*)\text{ for all }x\in\mathbb{B}(x^*;\delta)\right\}.
   \end{equation}
  Then $r>0$ and, for any $\epsilon\in(0,r)$, it holds that $\|y-x^*\|\leq\|x-x^*\|$ whenever $x\in\mathbb{B}(x^*;\epsilon)$ and $y\in T(x)$. Furthermore, if the initial point $x_0$ is contained in $\mathbb{B}(x^*;\epsilon)$ and $x_{n+1}\in T(x_n)$ for all $n\in\mathbb{N}$, then the sequence $(x_n)_{n\in\mathbb{N}}$ converges to a point $\overline{x}\in\Fix T\cap\mathbb{B}(x^*;\epsilon)$.
\end{theorem}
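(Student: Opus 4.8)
The plan is to run the argument in three stages.

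\medskip
\noindent\textbf{Stage one: the two non-asymptotic assertions.} Positivity of $r$ is immediate: $\phi$ is osc at $x^*$ by \ref{P:lsc}, so \Cref{prop:basin of attraction} supplies a $\delta>0$ with $\phi(x)\subseteq\phi(x^*)$ for all $x\in\mathbb{B}(x^*;\delta)$, whence $r\geq\delta>0$. For the nonexpansiveness estimate, fix $\epsilon\in(0,r)$; by definition of the supremum there is a $\delta$ in the defining set with $\delta>\epsilon$, and hence $\phi(x)\subseteq\phi(x^*)$ for every $x\in\mathbb{B}(x^*;\epsilon)$. Given such an $x$ and $y\in T(x)$, write $y=T_i(x)$ with $i\in\phi(x)\subseteq\phi(x^*)$. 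Since $x^*\in\StrFix T$ forces $T_j(x^*)=x^*$ for all $j\in\phi(x^*)$, we have $x^*\in\Fix T_i$; then either $x\in\Fix T_i$, so $\|y-x^*\|=\|x-x^*\|$, or $x\notin\Fix T_i$, in which case \eqref{eq:qne inequality} with the fixed point $x^*$ gives $\|y-x^*\|<\|x-x^*\|$. In both cases $\|y-x^*\|\leq\|x-x^*\|\leq\epsilon$, so in particular $y\in\mathbb{B}(x^*;\epsilon)$.

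\medskip
\noindent\textbf{Stage two: forward-invariance.} If $x_0\in\mathbb{B}(x^*;\epsilon)$ and $x_{n+1}\in T(x_n)$ for all $n$, a trivial induction using stage one shows $x_n\in\mathbb{B}(x^*;\epsilon)$ for all $n$, and that $(\|x_n-x^*\|)_{n\in\mathbb{N}}$ is nonincreasing.

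\medskip
\noindent\textbf{Stage three: convergence via \Cref{th:elsner}.} Write $x_{n+1}=T_{i_n}(x_n)$ with $i_n\in\phi(x_n)\subseteq\phi(x^*)$, let $I^*\subseteq\phi(x^*)$ be the admissible set of $(i_n)_{n\in\mathbb{N}}$, and choose $n_0$ with $(i_n)_{n\geq n_0}$ an admissible control for $\{T_j\}_{j\in I^*}$. This collection has the common fixed point $x^*$, because $I^*\subseteq\phi(x^*)$. Applying \Cref{th:elsner} to the sequence $(x_{n_0+n})_{n\in\mathbb{N}}$ --- which is precisely the one generated from $x_{n_0}$ by the control $(i_{n_0+n})_{n\in\mathbb{N}}$ --- yields convergence to some $\overline{x}\in\bigcap_{j\in I^*}\Fix T_j$, and hence $x_n\to\overline{x}$. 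Closedness of the ball gives $\overline{x}\in\mathbb{B}(x^*;\epsilon)$. Finally, fixing any $j\in I^*$ and passing to a subsequence $x_{n_k}\to\overline{x}$ with $i_{n_k}=j\in\phi(x_{n_k})$, osc of $\phi$ at $\overline{x}$ gives $j\in\phi(\overline{x})$, and together with $\overline{x}\in\Fix T_j$ this yields $\overline{x}=T_j(\overline{x})\in T(\overline{x})$, i.e.\ $\overline{x}\in\Fix T$.

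\medskip
\noindent The argument is essentially bookkeeping once the right objects are identified; the one place needing care --- and the conceptual heart of the proof --- is the twofold use of the structural hypotheses. The strong fixed point property is exactly what makes $x^*$ a common fixed point of every locally active $T_i$, so that both the paracontracting estimate of stage one and the hypothesis of \Cref{th:elsner} in stage three are available. Then a second appeal to outer semicontinuity, now at the limit $\overline{x}$ rather than at $x^*$, is what upgrades ``$\overline{x}$ is fixed by each $T_j$, $j\in I^*$'' to the genuine inclusion $\overline{x}\in\Fix T$.
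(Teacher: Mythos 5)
Your proof is correct and follows the same overall route as the paper's: \Cref{prop:basin of attraction} to get $r>0$, the paracontracting inequality at the strong fixed point $x^*$ (which is a common fixed point of every $T_i$ with $i\in\phi(x^*)$) to obtain the monotonicity estimate and invariance of $\mathbb{B}(x^*;\epsilon)$, the observation that the admissible set $I^*$ is a nonempty subset of $\phi(x^*)$, and an application of \Cref{th:elsner} from $x_{n_0}$ with $J=I^*$. The one place you genuinely diverge is the final step. The paper deduces $\overline{x}\in\Fix T$ by asserting $\Fix T_{i^*}\subseteq\Fix T$ for some $i^*\in I^*$; as a standalone claim about a general union paracontracting operator this inclusion need not hold, since $T_{i^*}(x)=x$ only yields $x\in T(x)$ when $i^*\in\phi(x)$. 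Your argument supplies exactly the ingredient that justifies the conclusion: you pass to a subsequence along which the index $j\in I^*$ is actually applied, so that $j\in\phi(x_{n_k})$ with $x_{n_k}\to\overline{x}$, and then a second appeal to outer semicontinuity \ref{P:lsc} --- now at the limit point $\overline{x}$ rather than at $x^*$ --- gives $j\in\phi(\overline{x})$ and hence $\overline{x}=T_j(\overline{x})\in T(\overline{x})$. This refinement makes the last claim airtight (and is arguably how the paper's one-line justification should be read); everything else in your write-up matches the paper's proof.
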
	
\begin{proof}
  Since $x^*\in\StrFix T$, it follows from Definition~\ref{def:union para} that $\phi(x^*)\subseteq I$ is nonempty with
    $$ x^* \in \bigcap_{i\in\phi(x^*)}\Fix T_i \neq\emptyset. $$
  As $\phi$ is osc at $x^*$ (Property~\ref{P:lsc}), Proposition~\ref{prop:basin of attraction} implies the existence of a $\delta>0$ such that $\phi(x)\subseteq\phi(x^*)$ for all $x\in\mathbb{B}(x^*;\delta)$. Consequently, the constant $r$, as defined in \eqref{eq:attraction radius}, is not less than $\delta$ and thus, in particular, strictly greater than zero. 
  
  Let $\epsilon\in(0,r)$. Since $\{T_i\}_{i\in I}$ is a collection of paracontracting operators, we have that $ \|T_i(x)-x^*\| \leq \|x-x^*\|$ whenever $i\in\phi(x^*)$ which implies that 
    $$ \|y-x^*\| \leq \|x-x^*\|,$$
  whenever $x\in\mathbb{B}(x^*;\epsilon)$ and $y\in T(x)$. This completes the proof of the first two assertions.
  
  Furthermore, suppose $x_0\in\mathbb{B}(x^*;\epsilon)$ and $x_{n+1}=T_{i_n}(x_n)\in T(x_n)$ with ${i_n\in\varphi(x_n)}$ for all $n\in\mathbb{N}$. Then it follows that $x_n\in\mathbb{B}(x^*;\epsilon)$ for all $n\in\mathbb{N}$ and, thus together with \ref{P:nonempty}, that the set of admissible indices for $(x_n)_{n\in\mathbb{N}}$, denoted $I^*$, is a nonempty subset of $\phi(x^*)$. Consequently,
    \begin{equation}\label{eq:FPS I*}
    x^*\in \bigcap_{i\in\phi(x^*)}\Fix T_i\subseteq \bigcap_{i\in I^*}\Fix T_i
    \end{equation}
  and, by the definition of $I^*$, there exists an $n_0\in\mathbb{N}$ such that $(i_n)_{n=n_0}^\infty\subseteq I^*$ is an admissible control. Noting \eqref{eq:FPS I*} and applying Theorem~\ref{th:elsner} (with $z_0=x_{n_0}$ and $J=I^*$) gives that $x_n\to\overline{x}\in \Fix T_{i^*}$ for some $i^*\in I^*$. The final claim regarding the limit, $\overline{x}$, follows by noting that $\Fix T_{i^*}\subseteq\Fix T$ and that $\mathbb{B}(x^*;\epsilon)$ is closed. 
\end{proof}	
	
\begin{remark}[Necessity of strong fixed points]
  In general, the assumption that $x^*$ is strong fixed point of $T$ cannot be relaxed to a mere fixed point. For an example, the reader is referred to \cite[Remark~2]{BauNoll}.
\end{remark}	

\begin{remark}[Limit points need only be weak fixed points]
   Although the reference point $x^*$ is assumed to be a strong fixed point, in general the limit point $\overline{x}$ of the sequence $(x_n)_{n\in\mathbb{N}}$ need only be a weak fixed point. An example of this behaviour follows.
   
   Consider the union paracontracting operator $T:\mathbb{R}\setto\mathbb{R}$ defined by
      $$T(x):=\{T_1(x),T_2(x)\}$$
   where $T_1(x)=0$ and $T_2(x)=x$ (\emph{i.e.,}~the active selector is given by $\varphi(x)=\{1,2\}$ for all $x\in\mathbb{R}$). Then $\StrFix T=\{0\}$ and $\Fix T=\mathbb{R}$ and, for any initial point $x_0\in\mathbb{R}\setminus\{0\}$, the sequence $(x_n)_{n\in\mathbb{N}}$ with $x_n=x_0$ for all $n\in\mathbb{N}$ satisfies $$x_{n+1}=T_2(x_n)\in T(x_n)\quad\forall n\in\mathbb{N}.$$
   This constant sequence converges to $\overline{x}=x_0\in\mathbb{R}\setminus\{0\}=\Fix T\setminus\StrFix T$. 
\end{remark}

\begin{remark}[The Douglas--Rachford algorithm]
  Theorem~\ref{th:local convergence} is a generalisation of \cite[Theorem~1]{BauNoll} which considers the case in which $T$ is a specific operator; the \emph{Douglas--Rachford operator} for two sets, $A$ and $B$, which can be realised as finite unions of closed convex sets $\{A\}_{i\in I}$ and $\{B_j\}_{j\in J}$. That is, $A:=\bigcup_{i\in I}A_i$, $B:=\bigcup_{j\in J}B_j$ and
    $$T=\frac{I+R_B\circ R_A}{2},$$
  where $P_A(x):=\argmin_{a\in A}\|x-a\|$ denotes the \emph{projector} onto $A$ and $R_A:=2P_A-I$.   In this case, the function $\phi:X\setto I\times J$ can be defined as
  $$\phi(x) := \{(i,j)\in I\times J:P_{A_i}(x)\in P_A(x),\,P_{B_j}(R_{A_i}(x))\in P_{B}(R_{A_i}(x))\}, $$
  and we therefore have that $T(x)=\{T_{ij}(x):i\in\phi(x)\}$ where $T_{ij}:X\to X$ is defined by
    $$ T_{ij}=\frac{I+R_{B_j}\circ R_{A_i}}{2} \quad\forall (i,j)\in I\times J, $$
  is firmly nonexpansive by \cite[Propositions~4.8~\&~4.21]{BauComb} and hence, in particular, paracontracting. It is straightforward to see that $\phi$ satisfies \ref{P:nonempty}. By combining Proposition~\ref{prop:basin of attraction} and \cite[Theorem~1(2)]{BauNoll}, we deduce that $\phi$ also satisfies \ref{P:lsc}. We therefore recover \cite[Theorem~1]{BauNoll} within our framework of Theorem~\ref{th:local convergence}.
\end{remark}
  
A consequence of Theorem~\ref{th:local convergence} is the following result concerning global convergence if there exists a strong fixed point such that every operator is active.
\begin{corollary}[Global convergence]\label{cor:global convergence}
 Suppose $T:X\setto X$ is union paracontracting and there exists a strong fixed point $x^*\in\StrFix T$ such that $\phi(x^*)=I$. For any $x_0\in X$, select $x_{n+1}\in T(x_n)$ for all $n\in\mathbb{N}$. Then the sequence $(x_n)_{n\in\mathbb{N}}$ converges to a point $\overline{x}\in\Fix T$.		
\end{corollary}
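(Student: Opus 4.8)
The plan is to deduce the corollary from Theorem~\ref{th:local convergence} by observing that the hypothesis $\phi(x^*)=I$ makes the attraction radius $r$ in \eqref{eq:attraction radius} infinite, so that the \emph{local} convergence statement of Theorem~\ref{th:local convergence} becomes global.

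First I would record what the hypothesis says about the constituent operators. Since $x^*\in\StrFix T$, Definition~\ref{def:union para} gives $x^*\in\bigcap_{i\in\phi(x^*)}\Fix T_i$, so the assumption $\phi(x^*)=I$ means precisely that $x^*$ is a common fixed point of the entire collection $\{T_i\}_{i\in I}$. Next I would compute $r$: because $\phi$ maps $X$ into subsets of $I$, the inclusion $\phi(x)\subseteq I=\phi(x^*)$ holds for \emph{every} $x\in X$, hence every $\delta>0$ belongs to the set appearing in \eqref{eq:attraction radius}, and therefore $r=+\infty$.

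With $r=+\infty$ in hand the rest is bookkeeping. Given an arbitrary $x_0\in X$, set $\epsilon:=\|x_0-x^*\|+1$, so that $\epsilon\in(0,r)$ and $x_0\in\mathbb{B}(x^*;\epsilon)$. Theorem~\ref{th:local convergence} applied with this $\epsilon$ then shows that any sequence $(x_n)_{n\in\mathbb{N}}$ with $x_{n+1}\in T(x_n)$ for all $n\in\mathbb{N}$ converges to some $\overline{x}\in\Fix T\cap\mathbb{B}(x^*;\epsilon)\subseteq\Fix T$, which is the claim. Equivalently, one could argue directly from scratch: the common fixed point property makes $(\|x_n-x^*\|)_{n\in\mathbb{N}}$ non-increasing, so $(x_n)_{n\in\mathbb{N}}$ is bounded, its admissible set $I^*$ is a nonempty subset of $I=\phi(x^*)$, and Theorem~\ref{th:elsner} applied with $J=I^*$ and $z_0=x_{n_0}$ yields the limit; but this just re-runs the proof of Theorem~\ref{th:local convergence}, so invoking that theorem is cleaner.

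There is no genuine obstacle here: the substantive work is entirely contained in Theorem~\ref{th:local convergence}, and the only point requiring any care is the elementary observation that $\phi(x)\subseteq I$ holds for all $x$, which is exactly what removes the locality restriction once $\phi(x^*)=I$.
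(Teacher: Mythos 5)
Your proposal is correct and follows the same route as the paper: observing that $\phi(x^*)=I$ forces $\phi(x)\subseteq\phi(x^*)$ for every $x\in X$, hence $r=+\infty$ in \eqref{eq:attraction radius}, and then invoking Theorem~\ref{th:local convergence} with a ball large enough to contain $x_0$. Your explicit choice $\epsilon=\|x_0-x^*\|+1$ just spells out the final step that the paper leaves implicit.
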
  
\begin{proof}
  Since $\phi(x^*)=I$ and $\range(\phi)\subseteq I$, the constant $r>0$ in \eqref{eq:attraction radius} becomes
  $$r=\sup\left\{\delta>0:\phi(x)\subseteq I\text{ for all }x\in\mathbb{B}(x^*;\delta)\right\}=+\infty.$$
  The result then follows from Theorem~\ref{th:local convergence}. 
\end{proof}

\section{An Application: Sparsity constrained minimisation}  
In this section, we give an application of Theorem~\ref{th:local convergence} in \emph{sparsity constrained minimisation}. In particular, we consider minimisation problems (with $X=\mathbb{R}^k$) of the form
\begin{equation}
\label{eq:sparsity constraint minimisation}
\min_{x\in X}\{f(x):\|x\|_0\leq s\},
\end{equation}   
where $f:X\to\mathbb{R}$ is convex and continuously differentiable with Lipschitz continuous gradient, $\nabla f$, having Lipschitz constant $L>0$, $\|x\|_0$ denotes the \emph{$\ell_0$-functional} which counts the number of non-zero entries in a vector $x$, and $s\in\{0,1,\dots,k-1\}$ is an \emph{a priori} estimate on the sparsity of $x$ (typically $s\ll k$).

Denote $\mathcal{A}:=\{x\in X:\|x\|_0\leq s\}$. A \emph{forward-backward algorithm} (or a \emph{projected gradient algorithm}) \cite[(27.26)]{BauComb} for \eqref{eq:sparsity constraint minimisation} can now be compactly described as
  \begin{equation}\label{eq:fb T}
    x_{n+1}\in T(x_n):= P_{\mathcal{A}}\left(x_n-\gamma\nabla f(x_n)\right)\quad\forall n\in\mathbb{N},
  \end{equation}                  
where the constant $\gamma$ is selected such that $\gamma\in(0,2/L)$.  This algorithm also appears, for instance, in \cite{YuanLiZhang} under the name \emph{fast gradient hard thresholding pursuit (FGraHTP)}.
  
In order to investigate the properties of the forward-backward operator for \eqref{eq:sparsity constraint minimisation}, we introduce some further notation. Denote $\mathcal{I}:=\{I\in 2^{\{1,2,\dots,k\}} : |I|=s\}$. Then the set $\mathcal{A}$ can be expressed as the union of nonempty subspaces, precisely
$$\mathcal{A} = \bigcup_{I\in\mathcal{I}} A_I\text{~~where~~}A_I:=\{x\in X:x_i\neq 0\text{ only if }i\in I\}.$$
Consequently its projector has the form
$$ P_{\mathcal{A}}(x)=\{P_{A_I}(x):I\in\phi(x)\},$$
where $\phi(x):= \left\{I\in\mathcal{I}:\|x-P_{A_I}(x)\| = \min_{J\in\mathcal{I}}\|x-P_{A_{J}}(x)\|\right\}$ 
and, for any $I\in\mathcal{I}$, the single-valued projector onto $A_I$ is given componentwise by
	\begin{equation}
	\label{eq:PAI}
	 P_{A_I}(x)_i = \begin{cases}
	 x_i & i\in I\\
	 0   & i\not\in I,\\
	 \end{cases}\qquad\forall i\in\{1,2,\dots,k\}.
	\end{equation}
 The operator $T$ may then be expressed in terms of single-valued operators as
\begin{equation}\label{eq:T representation}
	T(x)=\{(P_{A_I}\circ Q)(x):I\in\varphi(x)\},
\end{equation}
where we denote $Q:=I-\gamma\nabla f$ and $\varphi:=(\phi\circ Q)$. Note that by \cite[Proposition~3.6]{restricted} we see that $\varphi(x)$ can be equivalently described as
\begin{equation}\label{eq:varphi equivlanet}
\varphi(x) = \left\{J\in\mathcal{I}: \min_{i\in J}|Q(x)_i| \geq \max_{i\not\in J}|Q(x)_i|\right\}.
\end{equation}

\begin{proposition}[Forward-backward with a sparsity constraint]
	The forward-backward operator for \eqref{eq:sparsity constraint minimisation}, $T:X\setto\mathcal{A}$,  defined by \eqref{eq:fb T} is union paracontracting.
\end{proposition}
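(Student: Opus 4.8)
The plan is to read off the three ingredients of \Cref{def:union para} directly from the representation~\eqref{eq:T representation}. Take the finite index set to be $\mathcal{I}$, the single-valued operators to be $T_I:=P_{A_I}\circ Q$ for each $I\in\mathcal{I}$, and the active selector to be $\varphi=\phi\circ Q$; then \eqref{eq:T representation} is literally of the form \eqref{eq:T first definition}. It remains to verify: (i) every $T_I$ is paracontracting; (ii) $\varphi$ satisfies \ref{P:nonempty}; and (iii) $\varphi$ satisfies \ref{P:lsc}.

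For (i): since $f$ is convex and differentiable with $L$-Lipschitz gradient, the Baillon--Haddad theorem gives that $\nabla f$ is $\tfrac1L$-cocoercive, so $Q=\Id-\gamma\nabla f$ is averaged nonexpansive for every $\gamma\in(0,2/L)$ (see \cite[\S4]{BauComb}). Each $P_{A_I}$ is the projector onto the closed subspace $A_I$, hence firmly nonexpansive and, in particular, averaged nonexpansive. The composition of averaged nonexpansive operators is again averaged nonexpansive \cite[\S4]{BauComb} and is continuous; therefore $T_I=P_{A_I}\circ Q$ is averaged nonexpansive, hence paracontracting. I expect this to be the point that most needs care: one cannot simply argue that $\Fix T_I$ is a set of \emph{common} fixed points of $P_{A_I}$ and $Q$ — for a generic $f$ the set $\Fix Q\cap A_I$ can be empty while $\Fix T_I$ is not — so the averagedness of the composition is genuinely needed, as is the observation that $\gamma\in(0,2/L)$ is precisely the range of step sizes making $Q$ averaged.

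For (ii): by the equivalent description~\eqref{eq:varphi equivlanet}, any $J\in\mathcal{I}$ consisting of $s$ indices that carry coordinates of $Q(x)$ of largest modulus lies in $\varphi(x)$, and such a $J$ always exists, so $\varphi(x)\neq\emptyset$ for all $x\in X$. For (iii): by \Cref{prop:basin of attraction} it suffices to show $\gph\varphi$ is closed in $X\times\mathcal{I}$, i.e.\ that $x_n\to x^*$, $I_n\in\varphi(x_n)$ and $I_n\to I$ force $I\in\varphi(x^*)$. Finiteness of $\mathcal{I}$ gives $I_n=I$ for all large $n$, so \eqref{eq:varphi equivlanet} yields $\min_{i\in I}|Q(x_n)_i|\geq\max_{i\notin I}|Q(x_n)_i|$; since $x\mapsto\min_{i\in I}|Q(x)_i|$ and $x\mapsto\max_{i\notin I}|Q(x)_i|$ are continuous (compositions of the continuous map $Q$ with $|\cdot|$, $\min$, $\max$), letting $n\to\infty$ gives $\min_{i\in I}|Q(x^*)_i|\geq\max_{i\notin I}|Q(x^*)_i|$, that is, $I\in\varphi(x^*)$. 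Combining (i)--(iii) with \eqref{eq:T representation} shows that $T$ is union paracontracting, and via \Cref{th:local convergence} this then yields local convergence of \eqref{eq:fb T} around any strong fixed point.
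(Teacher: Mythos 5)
Your proposal is correct and follows essentially the same route as the paper: the same decomposition $T_I=P_{A_I}\circ Q$ with selector $\varphi=\phi\circ Q$, paracontractingness via averagedness of the composition (you spell out Baillon--Haddad and the composition rule, where the paper simply cites the argument of \cite[Theorem~25.8]{BauComb}), and outer semicontinuity by a continuity/limit argument (you via \eqref{eq:varphi equivlanet}, the paper via the distance-minimisation definition of $\phi$). One cosmetic point: the closed-graph check is exactly the definition \ref{P:lsc} of osc, so the appeal to Proposition~\ref{prop:basin of attraction} there is unnecessary (that proposition characterises osc by a local inclusion, and is not what licenses the reduction), but this does not affect the argument.
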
  
\begin{proof}
    We use the representation \eqref{eq:T representation} to show that $T$ is union paracontracting.
    First note that, for each $J\in\mathcal{I}$, the argument in \cite[Theorem~25.8]{BauComb} shows that the (single-valued) operator $P_{A_J}\circ Q$ is averaged nonexpansive and thus, in particular, also paracontracting. The finiteness of the index set $\mathcal{I}$ together with the definition of $\phi$ imply that $\varphi$ is nonempty valued. To see that $\varphi$ is osc, consider two sequences, $(x_n)_{n\in\mathbb{N}}$ and $(I_n)_{n\in\mathbb{N}}$, such that $x_n\to x\in X$ and $I_n\to I\in\mathcal{I}$ such that $I_n\in\varphi(x_n)$. Since $\mathcal{I}$ is finite, it follows that $I=I_n$ for all sufficiently large $n$ and hence, using the fact that both $P_{A_J}$ and $Q$ are continuous, we deduce that
    \begin{align*}
      \|Q(x)-P_{A_I}(Q(x))\|  
      &= \lim_{n\to\infty}\|Q(x_n)-P_{A_I}(Q(x_n))\| \\
      &= \lim_{n\to\infty}\left(\min_{J\in \mathcal{I}}\|Q(x_n)-P_{A_J}(Q(x_n))\|\right) \\
      & = \min_{J\in \mathcal{I}}\left(\lim_{n\to\infty}\|Q(x_n)-P_{A_J}(Q(x_n))\|\right) \\
      &= \min_{J\in \mathcal{I}}\|Q(x)-P_{A_J}(Q(x))\|,
    \end{align*}
    which shows that $I\in\varphi(x)$ and we conclude that $\varphi$ is osc. This shows that $T$ is union paracontracting and completes the proof. 
\end{proof}

For a vector $x\in X$, let $\sigma_t(x)$ denote its $t$-th largest entry in magnitude, so that
$$\sigma_1(x)\geq \sigma_2(x)\geq \dots\geq\sigma_k(x)\geq0.$$
The following describes the fixed point sets of the forward-backward operator. Given a subspace $S\subseteq X$, its  \emph{orthogonal complement}, denoted $S^\perp$, is defined as $S^\perp := \{x\in X:\langle x,s\rangle =0,\,\forall s\in X\}$.
\begin{proposition}[Fixed points and local minima]\label{prop:fixed points local minima}
	Let $T:X\setto\mathcal{A}$ denote the forward-backward operator for \eqref{eq:sparsity constraint minimisation} and $x\in X$. The following assertions hold.
	\begin{enumerate}[label=(\roman*)]
		\item\label{it:fp i} Let $I\in\mathcal{I}$. Then $\Fix(P_{A_I}\circ Q)=\argmin_{z\in A_I}f(z)$. 
		\item\label{it:fp ii} Let $I\in\mathcal{I}$. If $x\in \Fix(P_{A_I}\circ Q)$ then $x\in A_I$ and $\nabla f(x)\in A_I^\perp.$
		\item\label{it:fp iii} $x\in\Fix T$ if and only if there exists an $I\in\varphi(x)$ such that $x\in\argmin_{z\in A_I}f(z)$. Moreover, in this case, $x$ satisfies
		\begin{equation}\label{eq:phi inequality}
		\min_{i\in I}|x_i|\geq \gamma\|\nabla f(x)\|_\infty. 
		\end{equation}
		\item\label{it:fp iv} If $x\in\Fix T$ and $ \|x\|_0=s$ (resp.\ $\|x\|_0<s$) then $x$ is a local (resp.\ global) minimum of \eqref{eq:sparsity constraint minimisation}. In particular, every weak fixed point of $T$ is a local minimum of \eqref{eq:sparsity constraint minimisation}.
        \item\label{it:fp v} $x\in\StrFix T$ if and only if $x\in\Fix T$ and $\cup_{I\in\varphi(x)}\argmin_{z\in A_I}f(z)$ is a singleton.  In particular, if $x\in\Fix T$ then $x\in\StrFix T$ as soon as either
  \begin{equation}\label{eq:strong fixed point condition}
  \nabla f(x)=0\quad\text{or}\quad
  \sigma_s(x)>\gamma\|\nabla f(x)\|_\infty .
  \end{equation}    
	\end{enumerate}		
\end{proposition}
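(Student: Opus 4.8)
The plan is to first settle the two single-operator facts \ref{it:fp i}--\ref{it:fp ii} and then lift everything to $T$ through the representation \eqref{eq:T representation}, leaning on the explicit description \eqref{eq:varphi equivlanet} of the active selector throughout. For \ref{it:fp i}, I would note that $P_{A_I}\circ Q=P_{A_I}\circ(\Id-\gamma\nabla f)$ is precisely the forward--backward operator for minimising the convex differentiable function $f$ over the closed convex set $A_I$, so that $x\in\Fix(P_{A_I}\circ Q)$ if and only if $x\in A_I$ and $\langle\nabla f(x),z-x\rangle\ge 0$ for all $z\in A_I$, which, by convexity, is exactly the first-order characterisation of $\argmin_{z\in A_I}f$ (see, e.g., \cite[Chapters~26--27]{BauComb}). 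For \ref{it:fp ii}, since $A_I$ is a \emph{subspace} the optimality inequality forces $\langle\nabla f(x),w\rangle=0$ for every $w\in A_I$, i.e.\ $\nabla f(x)\in A_I^\perp$, while $x\in A_I$ is immediate; equivalently one may observe directly that $x=P_{A_I}(Q(x))$ with $x\in A_I$ gives $-\gamma\nabla f(x)=Q(x)-x\in A_I^\perp$.

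Next, for \ref{it:fp iii}, the representation \eqref{eq:T representation} shows that $x\in\Fix T$ iff $(P_{A_I}\circ Q)(x)=x$ for some $I\in\varphi(x)$, which by \ref{it:fp i} reads $x\in\argmin_{z\in A_I}f$. To obtain \eqref{eq:phi inequality}, fix such an $I$: by \ref{it:fp ii} one has $\nabla f(x)_i=0$ for $i\in I$ and $x_i=0$ for $i\notin I$, so $Q(x)_i=x_i$ on $I$ and $Q(x)_i=-\gamma\nabla f(x)_i$ off $I$; hence $\min_{i\in I}|Q(x)_i|=\min_{i\in I}|x_i|$ and $\max_{i\notin I}|Q(x)_i|=\gamma\|\nabla f(x)\|_\infty$ (the maximum being over all coordinates since $\nabla f(x)$ vanishes on $I$), and feeding these into the membership criterion of \eqref{eq:varphi equivlanet} gives $\min_{i\in I}|x_i|\ge\gamma\|\nabla f(x)\|_\infty$. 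For \ref{it:fp iv}, $T$ takes values in $\mathcal A$, so $\Fix T\subseteq\mathcal A$ and the cases $\|x\|_0=s$, $\|x\|_0<s$ are exhaustive. If $\|x\|_0<s$, the $s$-element set $I$ from \ref{it:fp iii} contains a coordinate $i$ with $x_i=0$, so the left side of \eqref{eq:phi inequality} vanishes and $\nabla f(x)=0$; by convexity $x$ is then a global minimiser of $f$ over $X$, in particular of \eqref{eq:sparsity constraint minimisation}. If $\|x\|_0=s$, the support of $x$ equals $I$, so every $y$ sufficiently close to $x$ satisfies $y_i\ne 0$ for all $i\in I$; if also $\|y\|_0\le s$ this forces the support of $y$ to equal $I$, i.e.\ $y\in A_I$, and then $f(y)\ge\min_{z\in A_I}f=f(x)$ by \ref{it:fp i}, proving local minimality. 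The final claim of \ref{it:fp iv} is then immediate.

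For \ref{it:fp v}, the representation \eqref{eq:T representation} gives $x\in\StrFix T$ iff $(P_{A_I}\circ Q)(x)=x$ for \emph{every} $I\in\varphi(x)$, i.e.\ (by \ref{it:fp i}) iff $x\in\bigcap_{I\in\varphi(x)}\argmin_{z\in A_I}f$; since $\varphi(x)\ne\emptyset$ this already forces $x\in\Fix T$. Necessity of the stated condition then reduces to showing that this common membership, together with the tie-break structure \eqref{eq:varphi equivlanet} and the inequality \eqref{eq:phi inequality}, collapses $\bigcup_{I\in\varphi(x)}\argmin_{z\in A_I}f$ to the single point $x$. For sufficiency, if $x\in\Fix T$ and this union is a singleton, then it equals $\{x\}$, so $x\in A_I$ and $\nabla f(x)\in A_I^\perp$ for every $I\in\varphi(x)$ (by \ref{it:fp ii}); hence $Q(x)_i=x_i$ on each such $I$ and every $P_{A_I}\circ Q$ fixes $x$, i.e.\ $x\in\StrFix T$. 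For the two sufficient conditions \eqref{eq:strong fixed point condition}: if $\nabla f(x)=0$ then $Q(x)=x$, so $\varphi(x)=\phi(x)$ consists of $s$-element supersets of the support of $x$, each of whose projections fixes $x$; if $\sigma_s(x)>\gamma\|\nabla f(x)\|_\infty$ then necessarily $\|x\|_0=s$, and re-running the computation in \ref{it:fp iii} shows $|Q(x)_i|\ge\sigma_s(x)$ for $i$ in the support of $x$ and $|Q(x)_i|<\sigma_s(x)$ for all other $i$, so $\varphi(x)$ is the singleton $\{\operatorname{supp}(x)\}$ and $T(x)=\{(P_{A_{\operatorname{supp}(x)}}\circ Q)(x)\}=\{x\}$.

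The main obstacle will be the necessity half of \ref{it:fp v}: turning ``$T(x)=\{x\}$'' into ``the \emph{union} of the $\argmin$ sets is a single point'' requires careful accounting of the ties encoded in \eqref{eq:varphi equivlanet}, because individually the sets $\argmin_{z\in A_I}f$ need not reduce to a point, and it is precisely parts \ref{it:fp ii}--\ref{it:fp iii} that have to be leveraged to rule out spurious minimisers. A minor secondary point is the support bookkeeping underlying the $\|x\|_0=s$ case of \ref{it:fp iv}.
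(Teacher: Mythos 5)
Your handling of parts \ref{it:fp i}--\ref{it:fp iv} and of the two sufficient conditions in \eqref{eq:strong fixed point condition} is correct and essentially the paper's argument. For \ref{it:fp i} the paper simply cites \cite[Corollary~26.5(i)\&(viii)]{BauComb}, which is the variational characterisation you spell out; \ref{it:fp ii} is the same linearity-of-$P_{A_I}$ computation; \ref{it:fp iii} is the same chain of (in)equalities, and you usefully make explicit the step (left implicit in the paper) that $\nabla f(x)$ vanishes on $I$, so that $\max_{i\notin I}|\gamma\nabla f(x)_i|=\gamma\|\nabla f(x)\|_\infty$. In the $\|x\|_0=s$ case of \ref{it:fp iv} you replace the paper's citation of \cite[Lemma~3.4]{restricted} by a direct support argument (any $y\in\mathcal{A}$ close to $x$ must have support equal to $I$); that is a legitimate elementary substitute. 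Your derivation that $\sigma_s(x)>\gamma\|\nabla f(x)\|_\infty$ forces $\varphi(x)=\{\operatorname{supp}(x)\}$ via \eqref{eq:varphi equivlanet} is exactly the paper's inequality \eqref{eq:varphi inequality}, and the $\nabla f(x)=0$ case is the paper's $T(x)=P_{\mathcal{A}}(x)=\{x\}$ argument in slightly more detailed form.

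The genuine gap is the one you flag yourself and then postpone: the necessity half of the first equivalence in \ref{it:fp v} is never proved. What your argument actually establishes is that $x\in\StrFix T$ if and only if $x\in\bigcap_{I\in\varphi(x)}\argmin_{z\in A_I}f(z)$, i.e.\ every active operator fixes $x$; this intersection statement is also all that the paper's one-sentence proof (``the equivalence follows from \ref{it:fp i} and the definition of the strong fixed point set'') really delivers. Your plan to upgrade this to ``$\bigcup_{I\in\varphi(x)}\argmin_{z\in A_I}f(z)$ is a singleton'' by ``careful accounting of ties'' via \eqref{eq:varphi equivlanet} and \eqref{eq:phi inequality} cannot succeed as stated: membership of $x$ in each active $\argmin_{z\in A_I}f(z)$ does not force those sets to collapse to $\{x\}$. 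For example, if $f$ is constant and $\|x\|_0=s$, then $\varphi(x)=\{\operatorname{supp}(x)\}$ and $T(x)=\{x\}$, so $x\in\StrFix T$, while $\bigcup_{I\in\varphi(x)}\argmin_{z\in A_I}f(z)=A_{\operatorname{supp}(x)}$ is a whole subspace. So the missing step is not a technical tie-breaking argument but a real obstruction; to close \ref{it:fp v} you must either work with the intersection characterisation you derived (which is what the sufficient conditions \eqref{eq:strong fixed point condition} and the application of Theorem~\ref{th:local convergence} actually use) or add a hypothesis guaranteeing uniqueness of the minimisers of $f$ on the active subspaces. A minor secondary point: in your ``if'' direction you apply \ref{it:fp ii} to every $I\in\varphi(x)$, which tacitly assumes each active $\argmin_{z\in A_I}f(z)$ is nonempty and hence equal to the singleton; that assumption should be made explicit.
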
	
\begin{proof}
	\ref{it:fp i}:~Follows from \cite[Corollary~26.5(i)\&(viii)]{BauComb}.
	\ref{it:fp ii}:~Since $A_I$ is a subspace its projector, $P_{A_I}$, is a linear operator. As $x\in A_I$, we therefore have
	$$ x = P_{A_I}(x-\gamma\nabla f(x)) = P_{A_I}(x)-\gamma P_{A_I}(\nabla f(x)) = x- \gamma P_{A_I}(\nabla f(x)),$$
	which implies that $P_{A_I}(\nabla f(x))=0$. The claim follows.
	\ref{it:fp iii}:~By the definition of $T$ and \ref{it:fp i}, it follows that $x\in\Fix T$ if and only if there exists an $I\in\varphi(x)$ such that $x\in\Fix(P_{A_I}\circ Q)=\argmin_{z\in A_I}f(z)$. To prove \eqref{eq:phi inequality}, combine \ref{it:fp ii}, \eqref{eq:PAI} and the fact that $I\in\varphi(x)$ to deduce
	$$ \min_{i\in I}|x_i| = \min_{i\in I}|x_i-\gamma\nabla f(x)_i| \geq \max_{i\not\in I}|x_i-\gamma\nabla f(x)_i| =  \max_{i\not\in I}|\gamma\nabla f(x)_i|. $$
	\ref{it:fp iv}:~Suppose $x\in\Fix T$. By \ref{it:fp iii}, there exists an $I\in\varphi(x)$ such that 
	\begin{equation}
	\label{eq:fp inequality}
	x\in\argmin_{z\in A_I}f(z)\text{~~and~~}\min_{i\in I}|x_i|\geq \gamma\|\nabla f(x)\|_\infty.
	\end{equation}
	On one hand, if $\|x\|_0<s$ then $\min_{i\in I}|x_i|=0$ and \eqref{eq:fp inequality} implies ${\nabla f(x)=0}$. Since $f$ is convex and continuous, it follows that $x$ is a global minimum to the unconstrained problem $\min_{z\in X}f(z)$ (see \cite[Proposition~26.1]{BauComb}). Since $x\in\mathcal{A}$, it must also be a global minimum of the constrained problem $\min_{z\in\mathcal{A}}f(z)$.	
	On the other hand, if $\|x\|_0=s$, denote $\delta:=\min_{i\in I}|x_i|$. Then \cite[Lemma~3.4]{restricted} implies the inclusion
	$$ \{y\in\mathcal{A}:\|x-y\|\leq\delta\} \subseteq A_I. $$
	In other words, there is an open ball centered at $x$ whose intersection with $\mathcal{A}$ is contained entirely in $A_I$. Together with \eqref{eq:fp inequality}, this shows that $x$ is a local minimum of $\min_{z\in\mathcal{A}}f(z)$. 
	\ref{it:fp v}:~The equivalence follows from \ref{it:fp i} and the definition of the strong fixed point set. In particular, let $x\in\Fix T\subseteq\mathcal{A}$. On one hand, if $\nabla f(x)=0$, then  
	$T(x)=P_{\mathcal{A}}(x-\gamma\cdot 0)=P_{\mathcal{A}}(x)=\{x\},$
	and hence $x\in\StrFix T$. On the other hand, the definition of $\Fix T$ ensures the existence of an index set $I\in\varphi(x)$ such that $x\in\Fix(P_{A_I}\circ Q)$. \ref{it:fp ii} then gives $\nabla f(x)\in A_I^\perp$ which, by combining with \eqref{eq:strong fixed point condition}, yields the inequality
	\begin{equation}
	\label{eq:varphi inequality}
	\min_{i\in I}|Q(x)_i|=\min_{i\in I}|x_i| = \sigma_s(x) > \gamma\|\nabla f(x)\|_\infty = \max_{i\not\in I}|\gamma\nabla f(x)_i|=\max_{i\not\in I}|Q(x)_i|.
	\end{equation}                   
	Since the inequality in \eqref{eq:varphi inequality} is strict, \eqref{eq:varphi equivlanet} together with the definition of $\mathcal{I}$ shows that $\{I\}=\varphi(x)$. It then follows that $x\in\StrFix T$ which completes the proof. 
\end{proof}

Altogether, we arrive at the following specialisation of Theorem~\ref{th:local convergence} for \eqref{eq:sparsity constraint minimisation}.

\begin{corollary}[Local convergence for sparsity constrained minimisation]\label{cor:FGraHTP}
  The conclusions of Theorem~\ref{th:local convergence} apply to the forward-backward operator for \eqref{eq:sparsity constraint minimisation} defined by \eqref{eq:fb T}. In particular, around strong fixed points of $T$, the iteration \eqref{eq:fb T} is locally convergent to a local minimiser of \eqref{eq:sparsity constraint minimisation}.
\end{corollary}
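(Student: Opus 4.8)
The plan is to read off the corollary as a direct specialisation of Theorem~\ref{th:local convergence}, using the two structural facts already recorded for the forward-backward operator $T$ of \eqref{eq:fb T}: that it is union paracontracting, and that each of its weak fixed points is a local minimiser of \eqref{eq:sparsity constraint minimisation}. No new estimates are needed; the work is entirely in citing the right results in the right order.

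First I would invoke the preceding proposition, which establishes that $T$ is union paracontracting, so the hypotheses of Theorem~\ref{th:local convergence} are met. Consequently, for any $x^*\in\StrFix T$ the attraction radius $r$ in \eqref{eq:attraction radius} is strictly positive, and for each $\epsilon\in(0,r)$ every sequence generated by $x_{n+1}\in T(x_n)$ from an initial point $x_0\in\mathbb{B}(x^*;\epsilon)$ stays in $\mathbb{B}(x^*;\epsilon)$ and converges to some $\overline{x}\in\Fix T\cap\mathbb{B}(x^*;\epsilon)$. This already delivers the first assertion, that the conclusions of Theorem~\ref{th:local convergence} apply here.

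It then remains to describe the limit. Since $\overline{x}\in\Fix T$, Proposition~\ref{prop:fixed points local minima}\ref{it:fp iv} guarantees that $\overline{x}$ is a local minimiser of \eqref{eq:sparsity constraint minimisation} (in fact a global minimiser when $\|\overline{x}\|_0<s$), which gives the ``in particular'' clause and completes the proof. I do not expect a genuine obstacle: the only point worth flagging is that the statement concerns behaviour \emph{around} strong fixed points and is therefore vacuous if $\StrFix T=\emptyset$; if desired, one can note that Proposition~\ref{prop:fixed points local minima}\ref{it:fp v}, and in particular the sufficient conditions \eqref{eq:strong fixed point condition}, identify concrete situations in which strong fixed points exist, so the result is non-vacuous in the cases of interest.
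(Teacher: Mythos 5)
Your proposal is correct and matches the paper's (implicit) argument exactly: the preceding proposition supplies the union paracontracting hypothesis so Theorem~\ref{th:local convergence} applies, and Proposition~\ref{prop:fixed points local minima}\ref{it:fp iv} upgrades the limit $\overline{x}\in\Fix T$ to a local minimiser of \eqref{eq:sparsity constraint minimisation}. Nothing further is needed.
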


\begin{remark}[Restricted isometry/convexity/smoothness properties]
	Many results in the literature concerning the behaviour of the forward-backward algorithm applied to \eqref{eq:sparsity constraint minimisation} proceed by assuming a strong hypothesis such as  \emph{restricted isometry property} or one of its variants. For instance, \cite[Theorems~1~\&~2]{YuanLiZhang} rely on \emph{restricted strong convexity/smoothness} of the function $f$. Such properties typically provide a guarantee on accuracy of the recovered solutions and not only convergence. Whilst Corollary~\ref{cor:FGraHTP} provides no such guarantee, we emphasis that it also does not require any ``restricted" property be assumed, and so is complementary to any such result.
\end{remark}

\paragraph{Acknowledgements.}
 This work was supported by a Postdoctoral Fellowship from the Alexander von Humboldt Foundation. The author wishes to thank Yura Malitsky for suggesting Corollary~\ref{cor:global convergence} and the two anonymous referees whose comments have improved the manuscript.

\end{document}